\newcommand{\monthyear}[1]{%
  \def\@monthyear{\uppercase{#1}}}
\newcommand{\volnumber}[1]{%
  \def\@volnumber{\uppercase{#1}}}
\def\ps@plain{\ps@empty
  \def\@oddfoot{\@monthyear \hfil \thepage}%
  \def\@evenfoot{\thepage \hfil \@volnumber}}
\def\ps@firstpage{\ps@plain}
\def\ps@headings{\ps@empty
  \def\@evenhead{%
    \setTrue{runhead}%
    \def\thanks{\protect\thanks@warning}%
    \uppercase{The Fibonacci Quarterly}\hfil}%
  \def\@oddhead{%
    \setTrue{runhead}%
    \def\thanks{\protect\thanks@warning}%
    \hfill\uppercase{A System of 4 Recursions}}%
  \let\@mkboth\markboth
  \def\@evenfoot{%
    \thepage \hfil \@volnumber}%
  \def\@oddfoot{%
    \@monthyear \hfil \thepage}%
  }%
\newcommand{\s}[2]{#1^{(#2)}}
\theoremstyle{plain}
\numberwithin{equation}{section}
\newtheorem{thm}{Theorem}[section]
\newtheorem{theorem}[thm]{Theorem}
\newtheorem{lemma}[thm]{Lemma}
\newtheorem{example}[thm]{Example}
\newtheorem{definition}[thm]{Definition}
\newtheorem{proposition}[thm]{Proposition}
\begin{document}
\monthyear{Month Year}
\volnumber{Volume, Number}
\setcounter{page}{1}

\title[A System of Four Simultaneous Recursions]{A System of Four simultaneous Recursions \\ Generalization of the Ledin-Shannon-Ollerton Identity }
\author{Russell Jay Hendel}
\address{Towson University}
\email{rhendel@towson.edu}

\begin{abstract}
This paper further generalizes a recent result of Shannon and Ollerton who resurrected an old identity due to Ledin.  
This paper generalizes the Ledin-Shannon-Ollerton result to all the metallic sequences. The results give closed formulas for the sum of products of powers of the first $n$ integers with the first $n$ members of the metallic sequence. 
Three key innovations of this paper are i) reducing the proof of the generalization to the solution of  a system of 4 simultaneous recursions;
ii)  use of the shift operation to prove equality of polynomials; and iii) new OEIS sequences
arising from the coefficients of  the four polynomial
families satisfying the 4 simultaneous recursions.
\end{abstract}

\maketitle

\section{Introduction}
 Shannon and Ollerton \cite{Shannon} in a beautiful paper recently resurrected an old result of Ledin \cite{Ledin}. Here are two identities, one from the Ledin paper and one from this paper.

 \begin{equation}\label{e1}
    \displaystyle \sum_{k=1}^n k^2 F_k = 
     \biggl(n^2-2n+5\biggl) F_n +
    \biggl(n^2-4n+8)\biggr) F_{n+1} -8,
\end{equation}
where $\{F_k\}_{k \ge 0}$ are, as usual the Fibonacci numbers, and $n$ is an arbitrary positive integer.

\begin{equation}\label{e2}
    \displaystyle \sum_{k=1}^n k^3 P_k = 
    \frac{1}{2} \times 
    \biggl(n^3+3n-3\biggl) P_n +\frac{1}{2} \times
    \biggl(n^3-3n^2+6n-7)\biggr) P_{n+1} +\frac{7}{2},
\end{equation}
where $\{P_k\}_{k \ge 0}$ are, as usual the Pell numbers, and $n$ is an arbitrary positive integer.

What intrigued Shannon, Ollerton, and Ledin are the polynomial coefficients in the above equations: 
$n^2-2n+5, n^2-4n+8, \frac{1}{2}  ( n^3+3n-3  ), \text{ and } \frac{1}{2}  ( n^3-3n^2+6n-7 ).$

These examples  naturally motivate generalization. An outline of the rest of this paper is as follows. Section 2 describes the main result of this paper generalizing all previous results. The four, integer, triangular arrays used to produce the polynomial coefficients in \eqref{e1}-\eqref{e2} are presented in Section 3; they naturally correspond to  new OEIS sequences. 
Section 4 discusses the contributions of this paper to previous results.
Sections 5 begins the proof and naturally motivates defining the shift algebraic operator whose properties are explored in Section 6.
Section 7 completes the proof. Section 8 presents fun facts, patterns, and identities on the four new OEIS triangular arrays.

\section{The Main Result}

We first recall the definition of the \emph{metallic} sequences \cite{Spinadel, Wikipedia}.  
\begin{definition} The metallic sequence of order $m \ge 1,$ is defined recursively by
\begin{equation}\label{e3}
    G_0=0, G_1 =1, G_n= m G_{n-1} + G_{n-2}, n \ge 2.
\end{equation}
For $m=1,2$ the sequences have  names, Golden and Silver, corresponding to the Fibonacci and  Pell numbers.  Sequences for $m \ge 3,$ depending on the author, may be called Bronze, or Copper etc. When emphasis is needed, the symbol $G^{(m)}$ will indicate the metallic sequence with parameter $m$ \eqref{e3}. We will use the term 
\emph{metallicity of the sequence} to refer to $m.$
\end{definition}

\textbf{Convention.} Throughout the paper if $r(x)$ is any polynomial of degree $deg(r),$ then the notation for the coefficients of $r(x)$ are given by 
\begin{equation}\label{e4}
r(x) = \sum_{i=0}^{deg(r)} r_i x^i.
\end{equation}

\begin{theorem}[Main Theorem] 
For  $m \ge 1, n \ge 0,p \ge 1$  there exist two degree-p polynomials with rational coefficients, $F(x), T(x)$ such that
\begin{equation}\label{e5}
    S(n,m,p) = \displaystyle \sum_{k=0}^n k^p G^{(m)}_k =
    F(n) G^{(m)}_n + T(n) G^{(m)}_{n+1} - T_0.
\end{equation}
Moreover we can give explicit form to $F(n)$ and $T(n).$ We first define four integer-polynomial families satisfying the following
system of simultaneous recursions.
\begin{multline}\label{e6}
	\s{f}{0}(X) =1; \qquad \s{t}{i}(X) = \sum_{j=0}^{j=i} \binom{i}{j} X^{i-j} \s{f}{j}(X), i \ge 0; 
	\s{d}{i}(X) = \s{t}{i}(X) - X\s{f}{i}(X),  i \ge 0; \\
	 \s{s}{i}(X) = \s{f}{i}(X)+\s{d}{i}(X),  i \ge 0; \qquad \s{f}{i}(X)=\sum_{j=0}^{j=i-1} \binom{i}{j} X^{i-1-j} \s{s}{j}(X), i \ge 1. 
\end{multline}
Then the formulas for $F(n)$ and $T(n)$ are
\begin{equation}\label{e7}
F(n) = \sum_{i=0}^p  \frac{\s{f}{i}(m)}{ m^{i+1}} \times (-1)^i \binom{p}{i} n^{p-i}, \qquad
T(n) = \sum_{i=0}^p \frac{ \s{t}{i}(m)}{m^{i+1}} \times (-1)^i \binom{p}{i} n^{p-i}.
\end{equation}
\end{theorem}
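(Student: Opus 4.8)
The plan is to prove \eqref{e5} by induction on $n$, first reducing it to two polynomial identities that the pair $(F,T)$ must satisfy and then reading those identities off the recursion system \eqref{e6}. Write $G_n=\s{G}{m}_n$ and keep $m,p$ fixed, so $S(n)=S(n,m,p)$. The base case $n=0$ holds automatically: the $k=0$ summand is $0$, and the right side of \eqref{e5} is $F(0)G_0+T(0)G_1-T_0=T(0)-T_0=0$ since $G_0=0$, $G_1=1$, and $T_0=T(0)$ by the Convention \eqref{e4}. For the inductive step one uses $S(n)-S(n-1)=n^pG_n$; inserting the claimed formula at $n-1$ and replacing $G_{n-1}=G_{n+1}-mG_n$, the target identity at $n$ becomes
\[
F(n)G_n+T(n)G_{n+1}=F(n-1)G_{n+1}+\bigl(T(n-1)+n^p-mF(n-1)\bigr)G_n .
\]
Hence it suffices to prove, as identities of polynomials in $n$,
\[
T(n)=F(n-1)\qquad\text{and}\qquad F(n)+mF(n-1)-T(n-1)=n^p ;
\]
matching the coefficients of $G_n$ and $G_{n+1}$ then closes the induction, and uniqueness of $F,T$ follows from the growth of the metallic sequence.

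Next I would isolate the purely formal ``shift'' step flagged in the introduction. By \eqref{e7}, setting $\widehat f_i:=\s{f}{i}(m)/m^{i+1}$ we have $F(n)=\sum_{i=0}^p(-1)^i\binom pi\widehat f_i\,n^{p-i}$, and similarly $T,D,\mathcal S$ arise from $\widehat t_i,\widehat d_i,\widehat s_i$ defined the same way from $\s{t}{i},\s{d}{i},\s{s}{i}$. The lemma is: whenever scalar sequences satisfy the binomial-transform relation $\beta_i=\sum_{j=0}^i\binom ij\alpha_j$, the degree-$p$ polynomials $A(n)=\sum_i(-1)^i\binom pi\alpha_i n^{p-i}$ and $B(n)$ (built the same way from $(\beta_i)$) satisfy $B(n)=A(n-1)$. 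This follows by expanding $(n-1)^{p-j}$ and matching the coefficient of $n^{p-i}$ via the Vandermonde-type identity $\binom pj\binom{p-j}{i-j}=\binom pi\binom ij$.

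With the lemma in hand the two identities drop out of \eqref{e6}. Dividing $\s{t}{i}(X)=\sum_{j=0}^i\binom ij X^{i-j}\s{f}{j}(X)$ by $m^{i+1}$ and setting $X=m$ gives exactly $\widehat t_i=\sum_{j=0}^i\binom ij\widehat f_j$, so the lemma yields $T(n)=F(n-1)$. The definitions $\s{d}{i}=\s{t}{i}-X\s{f}{i}$ and $\s{s}{i}=\s{f}{i}+\s{d}{i}$ become $D(n)=T(n)-mF(n)$ and $\mathcal S(n)=(1-m)F(n)+T(n)$. Dividing $\s{f}{i}(X)=\sum_{j=0}^{i-1}\binom ij X^{i-1-j}\s{s}{j}(X)$ (valid for $i\ge1$) by $m^{i+1}$ gives $m\widehat f_i+\widehat s_i=\sum_{j=0}^i\binom ij\widehat s_j$ for $i\ge1$, while the initial value $\s{f}{0}=1$ gives $m\widehat f_0+\widehat s_0=\widehat s_0+1$; applying the lemma to $(\widehat s_j)$ then produces $mF(n)+\mathcal S(n)=\mathcal S(n-1)+n^p$. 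Substituting the two formulas for $\mathcal S$ and using the already-proved $T(n)=F(n-1)$ collapses this to $F(n)+mF(n-1)-T(n-1)=n^p$. Finally, \eqref{e6} is a triangular system ($\s{f}{0}$ determines $\s{t}{0},\s{d}{0},\s{s}{0}$, which determine $\s{f}{1}$, and so on), so it has a unique solution with all four families in $\Z[X]$, and $F,T$ have degree exactly $p$ because their leading coefficients equal $\widehat f_0=\widehat t_0=1/m\neq 0$.

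The induction, the base case, and the triangularity of \eqref{e6} are routine; the real work is the bookkeeping of the last two paragraphs --- lining up the powers $m^{i+1}$, the weights $(-1)^i\binom pi$, and the summation ranges so that the three relations of \eqref{e6} become precisely the binomial-transform hypothesis of the shift lemma. The coefficient of $n^p$ must be tracked separately from the lower-order ones (this is exactly why the initial condition $\s{f}{0}=1$ sits alongside the $i\ge1$ recursion for $\s{f}{i}$), and the single identity on which the whole mechanism hinges is the Vandermonde relation $\binom pj\binom{p-j}{i-j}=\binom pi\binom ij$.
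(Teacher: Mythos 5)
Your argument is correct and follows the paper's overall strategy: induction on $n$, reduction to the two polynomial identities $T(n)=F(n-1)$ and $F(n)+mF(n-1)-T(n-1)=n^p$ (the paper's \eqref{e17}, written with the shift in the other direction), and a verification resting on the identity $\binom{p}{j}\binom{p-j}{i-j}=\binom{p}{i}\binom{i}{j}$, which is exactly the paper's \eqref{e23}. Where you differ is in the packaging of the shift step and in which relation from \eqref{e6} carries the second identity. The paper defines the bar operator $\bar R(x+1)=R(x)$, writes out its coefficient matrix \eqref{e18}, and verifies \eqref{e17} coefficient by coefficient; to handle $F=\bar D+X^p$ it first needs the auxiliary Proposition \ref{fequald}, expressing $\s{f}{i}$ as a binomial sum of the $\s{d}{j}$. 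You instead isolate one reusable lemma --- a binomial transform of the coefficient sequence $\alpha_i\mapsto\sum_j\binom ij\alpha_j$ corresponds to replacing $n$ by $n-1$ in the associated polynomial $\sum_i(-1)^i\binom pi\alpha_i n^{p-i}$ --- and apply it twice: to the $t$--$f$ relation (giving $T(n)=F(n-1)$ directly) and to the $f$--$s$ relation, where the $i=0$ discrepancy $m\widehat f_0+\widehat s_0=\widehat s_0+1$ is precisely what produces the $n^p$ term. This bypasses Proposition \ref{fequald} entirely and makes the origin of the inhomogeneous term $X^p$ more transparent; the paper's version, in exchange, records the explicit matrix of the shift operator and the $f$--$d$ identity, which it reuses elsewhere. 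One cosmetic note: your $D(n)=T(n)-mF(n)$ is what the paper intends by \eqref{e8} (its printed $T(n)-nF(n)$ is a typo, as its own use in \eqref{e11} confirms), so your reading is the right one.
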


For the proof of the Main Theorem we will also need a polynomial $D(k)$, defined by 
\begin{equation}\label{e8}
D(n) = T(n) - n F(n) =
\sum_{i=0}^p  \frac{\s{d}{i}(m)}{ m^{i+1}} \times (-1)^i \binom{p}{i} n^{p-i},
\end{equation}
where the last equality follows from \eqref{e6}.

\section{Basics about the Main Theorem}

Before proceeding, we present in this section, some basics about the Main Theorem.

\textbf{Comment.} There are many symbols in this theorem and throughout the paper. We have endeavored to make the notation mnemonical to facilitate readability. $F$ and $T$ correspond to the \emph{first} and $\emph{second},$ (that is \emph{the number ``t"wo}) polynomial in \eqref{e5} when read from left to right (or corresponding to increasing indices of $G$). $m$ corresponds to the \emph{metallicity} of the recursion. $p$ corresponds to the \emph{power} to which we raise $k$ in \eqref{e5}.  $d$ corresponds to the \emph{difference} (of $t$ and a multiple of $f$) and
$s$ corresponds to the \emph{sum} (of $f$ and $d$).

The proof of \eqref{e5} will be by induction. We  can immediately prove the base case.

\begin{proposition}[Base Case]\label{basecase} For all $m \ge 1, p \ge 1, S(0,m,p) = 0.$ \end{proposition}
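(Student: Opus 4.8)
The plan is to evaluate the defining sum $S(0,m,p) = \sum_{k=0}^0 k^p G^{(m)}_k$ directly from its definition in \eqref{e5}. The sum ranges over the single index $k = 0$, so it reduces to the single term $0^p \cdot G^{(m)}_0$. Since we are given $p \ge 1$, the factor $0^p$ equals $0$, and hence the entire sum is $0$ regardless of the value of $G^{(m)}_0$. (In fact $G^{(m)}_0 = 0$ as well by \eqref{e3}, so the conclusion is doubly safe.)

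The only subtlety worth a sentence is the convention $0^p$: because the hypothesis explicitly restricts to $p \ge 1$, there is no ambiguity — we never encounter the indeterminate form $0^0$. I would state this explicitly so the reader sees why the hypothesis $p \ge 1$ is invoked here. One could also remark that this matches the right-hand side of \eqref{e5} in the degenerate sense: at $n = 0$, the claimed identity reads $F(0) G^{(m)}_0 + T(0) G^{(m)}_1 - T_0 = T(0) - T_0$, and since $F(n)$ and $T(n)$ are polynomials of degree $p$ with the structure given in \eqref{e7}, the constant term $T_0$ is precisely the value picked out when $n=0$ only if $T(0) = T_0$; but verifying that consistency is not needed for the base case itself, since the Proposition only asserts that the left-hand side vanishes.

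There is essentially no obstacle here: the statement is immediate from the definition of the empty-ish sum together with $p \ge 1$. If anything, the ``hard part'' is purely presentational — deciding how much to say about why the right-hand side of \eqref{e5} is also consistent at $n = 0$, which is really a remark rather than part of the proof. I would keep the proof to one or two lines: expand the sum, note $0^p = 0$ for $p \ge 1$, conclude $S(0,m,p) = 0$.
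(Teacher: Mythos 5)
Your proof is correct and takes essentially the same (direct) route as the paper, which simply declares the base case clear from the recursion \eqref{e3}: the sum at $n=0$ is the single term $0^p G^{(m)}_0$, which vanishes. Your added observation that either factor ($0^p$ with $p\ge 1$, or $G^{(m)}_0=0$) suffices is a harmless refinement of the same argument.
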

\begin{proof} Clear by the defining recursion, \eqref{e3}, and our conventions about polyomial coefficients \eqref{e4}.
\end{proof}

Each of the four polynomial families, $f,t,d,s,$ naturally gives rise to a triangular integer array 
arising from listing coefficients in ascending  powers of $X.$ These four arrays are presented 
in Tables \ref{tab:f} - \ref{tab:s} which provide conventions on row and column notation for these tables. In these tables, the capital
$T$ mnemonically stands for \emph{triangle}, with the superscript indicating which of the four families of rational functions are being described.

\begin{center}
\begin{table}[ht]
 \begin{small}
\caption
{The table $\s{T}{f}.$ Column and row conventions are illustrated for example by i) $\s{T}{f}_{3,2}=8,$
the coefficient of $X^2$ in $\s{f}{3}(X),$ or by, ii) $\s{f}{1}(X)=2-X.$}
\label{tab:f}
{
\renewcommand{\arraystretch}{1.3}
\begin{center}
\begin{tabular}{||c||c|c|c|c|c|c|c|c||} 
\hline \hline

 \;&$X^0$&$X^1$&$X^2$&$X^3$&$X^4$&$X^5$&$X^6$&$X^7$\\
\hline
$ \s{f}{0}(x)$&$1$&\;&\;&$ $&\;&\;&\;&\;\\
$  \s{f}{1}(X)$&$2$&$-1$&\;&\;&\;&\;&\;&\;\\
$  \s{f}{2}(X)$&$8$&$-4$&$1$&\;&\;&\;&\;&\;\\
$  \s{f}{3}(X)$&$48$&$-24$&$8$&$-1$&\;&\;&\;&\;\\
$  \s{f}{4}(X)$&$384$&$-192$&$80$&$-16$&$1$&\;&\;&\;\\
$  \s{f}{5}(X)$&$3840$&$-1920$&$960$&$-240$&$32$&$-1$&\;&\;\\
$  \s{f}{6}(X)$&$46080$&$-23040$&$13440$&$-3840$&$728$&$-64$&$1$&\;\\
$  \s{f}{7}(X)$&$645120$&$-322560$&$215040$&$-67200$&$16128$&$-2184$&$128$&$-1$\\

\hline \hline
\end{tabular}
\end{center}
}
 \end{small} 
\end{table}
\end{center}

\begin{center}
\begin{table}[ht]
 \begin{small}
\caption
{The table $\s{T}{t}.$
Column and row conventions are illustrated for example by i) $\s{T}{t}_{6,2}=7680,$
the coefficient of $X^2$ in $ \s{t}{6}(X),$ or by, ii) $\s{t}{3}(X)=48-2X^2.$}
\label{tab:t}
{
\renewcommand{\arraystretch}{1.3}
\begin{center}
\begin{tabular}{||c||c|c|c|c|c|c|c||} 
\hline \hline

 \;&$X^0$&$X^1$&$X^2$&$X^3$&$X^4$&$X^5$&$X^6$\\
\hline
$ \s{t}{0}(X)$&$1$&\;&\;&\;&\;&\;&\;\\
$  \s{t}{1}(X)$&$2$&\;&\;&\;&\;&\;&\;\\
$  \s{t}{2}(X)$&$8$&\;&\;&\;&\;&\;&\;\\
$  \s{t}{3}(X)$&$48$&\;&$2$&\;&\;&\;&\;\\
$ \s{t}{4}(X)$&$384$&\;&$32$&\;&\;&\;&\;\\
$  \s{t}{5}(X)$&$3840$&\;&$480$&\;&$2$&$ $&\;\\
$  \s{t}{6}(X)$&$46080$&\;&$7680$&\;&$128$&\;&\;\\
$  \s{t}{7}(X)$&$645120$&\;&$134400$&\;&$4368$&\;&$2$\\

\hline \hline
\end{tabular}
\end{center}
}
 \end{small} 
\end{table}
\end{center}

\begin{center}
\begin{table}[ht]
 \begin{small}
\caption
{The table $\s{T}{d}.$ Column and row conventions are illustrated for example by i) $\s{T}{f}_{5,3}=-960,$
the coefficient of $X^3$ in $\s{d}{5}(X),$ or by, ii) $\s{d}{0}(X)=1-X.$}
\label{tab:d}
{
\renewcommand{\arraystretch}{1.3}
\begin{center}
\begin{tabular}{||c||c|c|c|c|c|c|c|c||} 
\hline \hline
 \;&$X^0$&$X^1$&$X^2$&$X^3$&$X^4$&$X^5$&$X^6$&$X^7$\\
\hline
$ \s{d}{0}(X)$&$1$&$-1$&\;&\;&\;&\;&\;&\;\\
$ \s{d}{1}(X)$&$2$&$-2$&$1$&\;&\;&\;&\;&\;\\
$ \s{d}{2}(X)$&$8$&$-8$&$4$&$-1$&\;&\;&\;&\;\\
$ \s{d}{3}(X)$&$48$&$-48$&$26$&$-8$&$1$&\;&\;&\;\\
$ \s{d}{4}(X)$&$384$&$-384$&$224$&$-80$&$16$&$-1$&\;&\;\\
$ \s{d}{5}(X)$&$3840$&$-3840$&$2400$&$-960$&$242$&$-32$&$1$&\;\\
$\s{d}{6}(X)$&$46080$&$-46080$&$30720$&$-13440$&$3968$&$-728$&$64$&$-1$\\

\hline \hline
\end{tabular}
\end{center}
}
 \end{small} 
\end{table}
\end{center}

\begin{center}
\begin{table}[ht]
 \begin{small}
\caption
{The table $\s{T}{s}.$ Column and row conventions are illustrated for example by i) $\s{T}{s}_{3,0}=96,$
the coefficient of $X^0$ in $\s{t}{3}(X),$ or by,  ii) $\s{s}{0}(X)=2-X.$}
\label{tab:s}
{
\renewcommand{\arraystretch}{1.3}
\begin{center}
\begin{tabular}{||c||c|c|c|c|c|c|c|c||} 
\hline \hline

\;&$X^0$&$X^1$&$X^2$&$X^3$&$X^4$&$X^5$&$X^6$&$X^7$\\
\hline
$\s{s}{0}(X)$&$2$&$-1$&\;&\;&\;&\;&\;&\;\\
$\s{s}{1}(X)$&$4$&$-3$&$1$&\;&\;&\;&\;&\;\\
$\s{s}{2}(X)$&$16$&$-12$&$5$&$-1$&\;&\;&\;&\;\\
$\s{s}{3}(X)$&$96$&$-72$&$34$&$-9$&$1$&\;&\;&\;\\
$\s{s}{4}(X)$&$768$&$-576$&$304$&$-96$&$17$&$-1$&\;&\;\\
$ \s{s}{5}(X)$&$7680$&$-5760$&$3360$&$-1200$&$274$&$-33$&$1$&\;\\
$\s{s}{6}(X)$&$92160$&$-69120$&$44160$&$-17280$&$4696$&$-792$&$65$&$-1$\\

\hline \hline
\end{tabular}
\end{center}
}
 \end{small} 
\end{table}
\end{center}

\begin{example} We illustrate \eqref{e5}-\eqref{e7} and Tables \ref{tab:f}-\ref{tab:s} by deriving the polynomial $T(n)$ in \eqref{e2}.
In \eqref{e2}, $p=3,m=2.$ By Table \ref{tab:t}, we have
$\s{t}{0}(X)=1, \s{t}{1}(X)=2, \s{t}{2}(X)=8, \s{t}{3}(X)=48 + 2 X^2.$  
Hence, by \eqref{e7}
$$
	T(n) = \frac{1}{2^1}\binom{3}{0} n^3 - \frac{2}{2^2}\binom{3}{1} n^2 + \frac{8}{2^3} \binom{3}{2}n - \frac{48+2\cdot 2^2}{2^4} 
	\binom{3}{3} n^0  
 	= \frac{1}{2} n^3 - \frac{3}{2} n^2 + \frac{6}{2}n - \frac{7}{2}.
$$
\end{example}

The four triangular arrays have many obvious patterns; these will be explored in Section 8.

For the proof of the Main Theorem we need the following identity.
\begin{proposition}\label{fequald}For $i \ge 1,$
$$	\s{f}{i}(X) = \sum_{j=0}^i \binom{i}{j} X^{i-j} \s{d}{j}(X).$$ 
\end{proposition}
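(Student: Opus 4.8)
The plan is to prove the identity directly from the defining relations \eqref{e6}, with no induction at all; the one thing to notice is that multiplying the last relation of \eqref{e6} by $X$ converts each power $X^{i-1-j}$ into $X^{i-j}$, which is precisely the power occurring in the definition of $\s{t}{i}(X)$ and in the identity to be established, so that the three relations can simply be chained together.

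Concretely, I would start from $\s{f}{i}(X)=\sum_{j=0}^{i-1}\binom{i}{j}X^{i-1-j}\s{s}{j}(X)$, valid for $i\ge 1$, multiply through by $X$, and substitute $\s{s}{j}(X)=\s{f}{j}(X)+\s{d}{j}(X)$, obtaining $X\s{f}{i}(X)=\sum_{j=0}^{i-1}\binom{i}{j}X^{i-j}\s{f}{j}(X)+\sum_{j=0}^{i-1}\binom{i}{j}X^{i-j}\s{d}{j}(X)$. The first of these sums is exactly the definition of $\s{t}{i}(X)$ with its $j=i$ term removed, and that missing term is just $\binom{i}{i}X^{0}\s{f}{i}(X)=\s{f}{i}(X)$; hence the first sum equals $\s{t}{i}(X)-\s{f}{i}(X)$. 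Substituting this and rearranging yields $(X+1)\s{f}{i}(X)=\s{t}{i}(X)+\sum_{j=0}^{i-1}\binom{i}{j}X^{i-j}\s{d}{j}(X)$.

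To finish, I would eliminate $\s{t}{i}(X)$ using the third relation of \eqref{e6} in the form $\s{t}{i}(X)=\s{d}{i}(X)+X\s{f}{i}(X)$; the term $X\s{f}{i}(X)$ cancels the leftover $X$ on the left, leaving $\s{f}{i}(X)=\s{d}{i}(X)+\sum_{j=0}^{i-1}\binom{i}{j}X^{i-j}\s{d}{j}(X)$, and since $\s{d}{i}(X)=\binom{i}{i}X^{0}\s{d}{i}(X)$ is exactly the missing $j=i$ term, the right-hand side is $\sum_{j=0}^{i}\binom{i}{j}X^{i-j}\s{d}{j}(X)$, as claimed. No step here is genuinely difficult; the only thing to be careful about is the bookkeeping of the ``top'' ($j=i$) terms that are split off from, or reabsorbed into, the binomial sums, and this bookkeeping is also what forces the hypothesis $i\ge 1$: at $i=0$ the generating recursion for $\s{f}{i}(X)$ is replaced by $\s{f}{0}(X)=1$, whereas the right-hand side of the asserted identity would be $\s{d}{0}(X)=1-X$.

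Should a reader prefer an entirely mechanical route, the same fact drops out of exponential generating functions. Working in $\Q(X)[[z]]$ and writing $E:=e^{Xz}$, the four relations of \eqref{e6} translate to $\hat t=E\hat f$, $\hat d=(E-X)\hat f$, $\hat s=\hat f+\hat d$, and $\hat f-1=\frac{E-1}{X}\,\hat s$ (the last after re-indexing the double sum), which solve to $\hat f=X/(1+XE-E^2)$. The asserted identity, for $i\ge 1$, is equivalent to $\hat f=E\hat d+X$, and inserting $\hat d=(E-X)\hat f$ reduces this once more to $\hat f=X/(1+XE-E^2)$, so the two sides agree in every degree $\ge 1$.
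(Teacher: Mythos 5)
Your main argument is correct and is in substance the same as the paper's: both proofs combine the same four relations from \eqref{e6} (the $f$-from-$s$ recursion, $\s{s}{j}=\s{f}{j}+\s{d}{j}$, the definition of $\s{t}{i}$ with its $j=i$ term split off, and $\s{d}{i}=\s{t}{i}-X\s{f}{i}$) and the same cancellation of the two $\sum\binom{i}{j}X^{i-j}\s{f}{j}$ sums. The only difference is organizational: the paper substitutes its intermediate expressions into the target identity and verifies that the result collapses to $0=0$, whereas you chain the relations forward to derive the identity directly, which reads a little more cleanly and makes the role of the split-off $j=i$ terms (and hence the hypothesis $i\ge 1$) explicit. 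Your exponential-generating-function aside is a genuinely different and correct verification --- the translations $\hat t=E\hat f$, $\hat d=(E-X)\hat f$, $\hat f-1=\frac{E-1}{X}\hat s$ and the resulting $\hat f=X/(1+XE-E^2)$ all check out, and the identity does reduce to $\hat f=E\hat d+X$ --- but it is an optional bonus rather than a necessary ingredient.
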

\begin{proof}
To make the notation clearer, we omit the arguments of polynomials.
By \eqref{e6}, $\s{s}{i} = \s{d}{i} + \s{f}{i}.$ Hence, by \eqref{e6}, for $i \ge 1,$
\begin{equation}\label{temp1}
	\s{f}{i}=\sum_{j=0}^{j=i-1} \binom{i}{j}  X^{i-1-j} \s{s}{j} = \sum_{j=0}^{j=i-1} \binom{i}{j} X^{i-1-j} (\s{f}{j} + \s{d}{j}).
\end{equation}
Again, by \eqref{e6}, for $i \ge 1,$
\begin{equation}\label{temp2}
	\s{t}{i}=\sum_{j=0}^{j=i} \binom{i}{j} X^{i-j} \s{f}{j}  =  \s{f}{i} + \sum_{j=0}^{j=i-1} \binom{i}{j} X^{i-j} \s{f}{j}.
\end{equation}
By \eqref{e6}, $\s{d}{i}=  \s{t}{i} - X\s{f}{i}.$ Substituting \eqref{temp1} and \eqref{temp2} we have
\begin{equation}\label{temp3}
	\s{d}{i}= \s{f}{i} + \sum_{j=0}^{j=i-1} \binom{i}{j} X^{i-j} \s{f}{j} - \sum_{j=0}^{j=i-1} \binom{i}{j} X^{i-j} (\s{f}{j} + \s{d}{j})
\end{equation}
The proposition requires us to prove 
\begin{equation}\label{temp4}
	\s{f}{i}= \sum_{j=0}^i \binom{i}{j} X^{i-j} \s{d}{j} = \s{d}{i} +  \sum_{j=0}^{i-1} \binom{i}{j} X^{i-j} \s{d}{j}
\end{equation}
Substituting 
\eqref{temp3}
into 
\eqref{temp4}
 and cancelling $\s{f}{i}$ from both sides
of the resulting equation, we see we must prove
$$
0=\sum_{j=0}^{j=i-1} \binom{i}{j} X^{i-j} \s{f}{j} - \sum_{j=0}^{j=i-1} \binom{i}{j} X^{i-j} (\s{f}{j} + \s{d}{j}) +  \sum_{j=0}^{i-1} \binom{i}{j} X^{i-j} \s{d}{j}.
$$
This completes the proof. 
\end{proof}

\section{Contributions of this paper}

There are four main contributions of this paper over previous results.

First, the obvious contribution, that we are generalizing the Ledin and Shannon-Ollerton results to all metallic sequences.

Second, this generalization introduces new techniques, the technique of simultaneous systems of recursions. Simultaneous systems
have not been explored that much in the Fibonacci Quarterly and they are a welcome venue for future researchers. Additionally, this
paper uses  the shift operator to prove equality of polynomials.

Third, while the Shannon-Ollerton paper delightfully connects the proofs to known and established OEIS sequences, this paper 
leads to four new integer triangular arrays.

Finally, the Shannon-Ollerton approach while also  using an inductive approach
reduced the proof to certain identities with the Bernoulli numbers. The inductive proof in this paper avoids reduction to the Bernoulli numbers. It was hoped that since the proof in this paper avoids the Bernoulli numbers that the 
various conjectures about Bernoulli numbers made by Shannon-Ollerton could be proven by the results of this paper; but so far a straightforward proof has
eluded me.

\section{The Bar Algebraic-Shift Operator}

Recall, that we will  prove the Main Theorem,  by showing that $F(n), T(n)$ as defined by
\eqref{e7} satisfy \eqref{e5}. To accomplish this proof we will need an algebraic operator that shifts arguments of a target polynomial. Since the need for this algebraic operator arises naturally in the proof, we motivate the need, and then define this operator, by beginning the proof of the Main Theorem in this section.

To begin the proof, we assume $m$ and $p$ fixed, allowing us to smoothen the exposition by
 writing $S(n)$ instead of $S(n,m,p)$ and $G_n$ instead of 
$\s{G}{m}_n.$   

The proof is by induction; the base case, $n=0,$ has already been done in Proposition \ref{basecase}. For an induction step we assume \eqref{e5} true  for the case $n$ and proceed to prove it for the case $n+1.$ That is we must prove

\begin{equation}\label{e10}
		S(n+1) = F(n+1) G_{n+1} + T(n+1) G_{n+2} - T_0. 
\end{equation}

But by the induction assumption, \eqref{e5},
$$
		S(n+1) = S(n) + (n+1)^p G_{n+1} = F(n) G_{n} + T(n) G_{n+1} - T_0 + (n+1)^p G_{n+1}.
$$
By \eqref{e3} we can write $G_n = G_{n+2} - m G_{n+1},$ reducing the last equation to the equivalent, 
\begin{equation}\label{e11} 
					S(n+1) =(T(n) - mF(n) G_{n+1} + F(n) G_{n+2} -T_0 +(n+1)^p G_{n+1}=
					D(n) G_{n+1} + F(n) G_{n+2} - T_0 +(n+1)^p G_{n+1},
\end{equation}
the last  equality arising from \eqref{e8}. 

Equating the right hand sides \eqref{e10} and \eqref{e11}, we see that to accomplish the induction step it suffices to prove
\begin{equation}\label{e13}
 	F(n+1) G_{n+1} + T(n+1) G_{n+2} = D(n) G_{n+1} + F(n) G_{n+2}  +(n+1)^p G_{n+1}.
\end{equation}

To prove \eqref{e13} it suffices to equate coefficients of $G_{n+1}, G_{n+2}.$ That is it suffices to prove
\begin{equation}\label{e14}
	F(n+1) = D(n) + (n+1)^p; \qquad T(n+1) = F(n). 
\end{equation}

At this point we get stuck. The traditional way of proving the equality of polynomials, $F(n+1) = D(n),$ by equating corresponding coefficients in the polynomials, does not work naturally here,
since the values of the arguments on the two sides are different ($n+1$ vs. $n$); equating coefficients is therefore not justifiable.

This motivates the following definition. For a given polynomial $R$ define $\bar{R}$ as the polynomial such that
\begin{equation}\label{e15}
	\bar{R}(x+1) = R(x).
\end{equation}
The bar operator is simply a shift operator.
  
Prior to proving the existence and the form of the bar-shift operator in the next section, we show how its existence simplifies the proof.
To prove \eqref{e14}, it suffices, using \eqref{e15}, to prove
\begin{equation}\label{e16}
	F(n+1) = D(n)+ (n+1)^p = \bar{D}(n+1)+ (n+1)^p, \qquad T(n+1) = F(n)  = \bar{F}(n+1).
\end{equation}
In turn, to prove \eqref{e16}, and hence to complete the proof of the Main Theorem, it suffices to prove the two polynomial equalities of
\begin{equation}\label{e17}
	F(X) = \bar{D}(X) +X^p ; \qquad T(X) = \bar{F}(X),
\end{equation}
by showing that corresponding coefficients are equal.

\section{Properties of the Bar Shift Operator}

To prove \eqref{e17} we need properties of the bar-shift operator.

\begin{lemma} $\bar{R}(X)$ always exists, \end{lemma}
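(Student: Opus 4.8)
The statement to prove is that for any polynomial $R(X)$, the shifted polynomial $\bar R(X)$ satisfying $\bar R(x+1) = R(x)$ always exists.

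\textbf{Proof plan.} The plan is to exhibit $\bar R$ explicitly by substituting $x \mapsto x-1$ into $R$; that is, I would set $\bar R(X) := R(X-1)$ and verify that this choice satisfies the defining relation \eqref{e15}. Substituting $x+1$ for $X$ gives $\bar R(x+1) = R((x+1)-1) = R(x)$, which is exactly \eqref{e15}. The only thing that needs justification is that $R(X-1)$ is again a polynomial, which is immediate: if $R(X) = \sum_{i=0}^{\deg(R)} R_i X^i$ in the notation of \eqref{e4}, then $R(X-1) = \sum_{i=0}^{\deg(R)} R_i (X-1)^i$, and expanding each $(X-1)^i$ by the binomial theorem and collecting terms shows $R(X-1)$ is a polynomial of the same degree $\deg(R)$ with leading coefficient $R_{\deg(R)}$.

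If one wants the coefficients of $\bar R$ explicitly (which will be useful later for the coefficient-comparison arguments needed to prove \eqref{e17}), one collects powers of $X$: writing $\bar R(X) = \sum_k \bar R_k X^k$, the binomial expansion gives $\bar R_k = \sum_{i \ge k} R_i \binom{i}{k} (-1)^{i-k}$. This is the natural companion formula, and it is worth recording here even though the bare existence statement does not require it.

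\textbf{Main obstacle.} Honestly, there is no real obstacle: the statement is essentially a definitional observation, since a polynomial can be composed with the affine substitution $X \mapsto X-1$ and the result is again a polynomial. The only mild subtlety is bookkeeping — making sure uniqueness is also noted (if $\bar R$ and $\tilde R$ both satisfy the relation, then $\bar R - \tilde R$ vanishes on the infinite set $\{x+1 : x\}$, hence is the zero polynomial), so that the notation $\bar R$ is well defined. I would state and use that uniqueness remark in passing, since later steps refer to ``the'' bar of a polynomial.
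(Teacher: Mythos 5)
Your proposal is correct and takes essentially the same approach as the paper: the paper's one-line proof also constructs $\bar R$ by substituting $Y-1$ into $R$ via $\sum_i R_i(Y-1)^i$, and your explicit coefficient formula $\bar R_k=\sum_{i\ge k}R_i\binom{i}{k}(-1)^{i-k}$ is exactly the matrix relation the paper records as \eqref{e18}. Your added uniqueness remark is a harmless (and sensible) bonus not present in the paper.
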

\begin{proof} Clear. Using \eqref{e4}, define $S(Y) = \sum_{i=0}^{degree(R)} R_i (Y-1)^i.$
Then $\bar{R}(X)=S(X+1).$  \end{proof}

\begin{example} Let $R(X)=X^2.$ Then $\bar{R}(X)=X^2-2X+1.$  We may verify that
$\bar{R}(X+1) = (X+1)^2 - 2(X+1)+1 = X^2=R(X)$ as required by \eqref{e15}. \end{example}

As just pointed out, by writing $R(X)=R((X+1)-1)$ we can obtain the coefficients of $\bar{R}(X)$ from those of $R(X).$ For a general polynomial $R$ of degree $q$ we have, using our conventions about polynomial coefficients, \eqref{e3}, that 

$\begin{pmatrix} \bar{R}_{q} \\
                \bar{R}_{q-1} \\
                \bar{R}_{q-2} \\
                \vdots \\
                \bar{R}_{0} \\
\end{pmatrix} 
=
\begin{pmatrix}
1 & 0  & 0 &\dotsc &0 \\
-\binom{q}{1} & \binom{q-1}{0} &0  &\dotsc & 0 \\
\binom{q}{2} & \binom{q-1}{1} & \binom{q-2}{0} &\dotsc & 0 \\
\vdots 	      & \vdots  	      &     \vdots   	      &     \vdots   & \vdots 	      \\
(-1)^q \binom{q}{q} & (-1)^{q-1} \binom{q-1}{q-1} & (-1)^{q-2} \binom{q-2}{q-2} &\dotsc &1 
\end{pmatrix}
\begin{pmatrix} R_{q} \\
                R_{q-1} \\
                R_{q-2}\\
                \vdots \\
                R_{0}
\end{pmatrix} $

By equating polynomial coefficients, this matrix equation 
generates $q$ equations.
\begin{equation}\label{e18}
\bar{R}_{q} = R_q; \qquad 
\bar{R}_{q-1} = -\binom{q}{1} R_q +\binom{q-1}{0} R_{q-1}; 
\bar{R}_{q-2}=\binom{q}{2} R_q -\binom{q-1}{1} R_{q-1}+ \binom{q-2}                                                                                                                                                                                                              {0} R_{q-2}; \dotsc
\end{equation}

We close this section with the following obvious observation.
\begin{lemma}\label{lem:linear}
    The polynomial shift operator is a linear
    operator (on polynomials).
\end{lemma}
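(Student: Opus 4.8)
\textbf{Proof proposal for Lemma \ref{lem:linear}.}

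The plan is to unwind the definition \eqref{e15} and observe that the bar operator is nothing more than the substitution $x \mapsto x-1$, which is manifestly additive and homogeneous. Concretely, from \eqref{e15} and the existence lemma we have the pointwise identity $\bar{R}(x) = R(x-1)$ for every polynomial $R$ and every $x$.

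Given polynomials $R$ and $S$ and scalars $a, b \in \Q$, set $P(X) = a R(X) + b S(X)$. Then for every $x$,
$$
\bar{P}(x) = P(x-1) = a R(x-1) + b S(x-1) = a \bar{R}(x) + b \bar{S}(x).
$$
Since $\bar{P}$ and $a\bar{R} + b\bar{S}$ are polynomials that agree at infinitely many points, they are equal as polynomials; that is, $\overline{aR + bS} = a\bar{R} + b\bar{S}$, which is the asserted linearity. Alternatively, one may read linearity directly off the matrix in \eqref{e18}: the coefficient vector of $\bar{R}$ is obtained from that of $R$ by multiplication by a fixed (lower-triangular, unipotent) matrix, and matrix multiplication is linear in the vector it acts on.

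There is no real obstacle here; the only point requiring a word is the passage from the pointwise identity to equality of polynomials, which is immediate since we work over an infinite field. I would therefore keep the proof to the two or three lines above.
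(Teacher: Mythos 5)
Your proof is correct and fills in exactly the details the paper leaves out: the paper's own proof of this lemma is the single word ``Clear.'' Both your substitution argument ($\bar{R}(x)=R(x-1)$, linearity of substitution, and equality of polynomials agreeing at infinitely many points) and your alternative reading off the fixed matrix in \eqref{e18} are valid and are the obvious routes the paper is implicitly invoking.
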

\begin{proof} Clear. \end{proof}

\section{Completion of the Proof of the Main Theorem }

We continue the proof of the Main Theorem begun in the   Section 5. Recall we showed
that to prove \eqref{e7} satisfies \eqref{e5} it suffices to prove \eqref{e17}. Notice that the polynomial $X^p$ on the right-hand side of the first equation in \eqref{e17} only contributes to the coefficient of $X^i$ when $i=p.$ Accordingly we must deal with two cases. For each case we must prove coefficient equality of the two asserted identities in \eqref{e17}.

\textbf{Case $X^i, i=p.$} Equation \eqref{e17} is stated using the bar-shift operator. But by \eqref{e18} $\bar{D}_p=D_p$ and $\bar{F}_p=F_p.$ Thus it suffices to prove that 
$$ F_p=D_p+1, T_p=F_p.$$
These equations are immediately proven using \eqref{e6}- \eqref{e8}, which state 
$$ F_p=\frac{1}{m}, 
T_p=\frac{1}{m}, \text{ and }
D_p = \frac{1-m}{m}.$$

\textbf{Case $X^{p-i},$ with $i<p.$}

First we prove the second equation in \eqref{e17},
$$
    T_{p-i}= \bar{F}_{p-i}.
$$
By \eqref{e7}
$$ T_{p-i} =(-1)^i \binom{p}{i} \frac{\s{t}{i}(m)}{m^{i+1}}.$$
By \eqref{e6} we therefore have
\begin{equation}\label{e19}
 T_{p-i} =(-1)^i \binom{p}{i} \frac{1}{m^{i+1}} 
 \sum_{j=0}^i \binom{i}{j} m^{i-j} \s{f}{j}(m).
\end{equation}

By \eqref{e18}
$$    
    \bar{F}_{p-i} =
    \sum_{j=0}^i
    \binom{p-i}{i-j}(-1)^{i+j} F_{p-j}.
$$    
Applying \eqref{e7} to this last equation we obtain 
\begin{equation}\label{e20}
 \bar{F}_{p-i} = \sum_{j=0}^i
    \binom{p-i}{i-j}(-1)^{i+j} \binom{p}{j} \s{f}{j}(m)(-1)^j
    \frac{1}{m^{j+1}}.
\end{equation}

To prove the second equation in \eqref{e17} true, we must show that the left hand sides of \eqref{e19} and \eqref{e20} are equal which follows since the right hand sides of \eqref{e19} and \eqref{e20} are equal, where we have used the binomial coefficient identity, \cite{Wolfram},
\begin{equation}\label{e23}
    \binom{p}{i} 
    \binom{i}{j} =
    \binom{p-j}{i-j}
    \binom{p}{j}.
\end{equation}

Next, we prove the first equation in \eqref{e17},
$$
    F_{p-i}= \bar{D}_{p-i}.
$$
By \eqref{e7}
$$ F_{p-i} =(-1)^i \binom{p}{i} \frac{\s{f}{i}(m)}{m^{i+1}}.$$
By Proposition \ref{fequald}  we therefore have
\begin{equation}\label{e21}
 F_{p-i} =(-1)^i \binom{p}{i} \frac{1}{m^{i+1}} 
 \sum_{j=0}^i \binom{i}{j} m^{i-j} \s{d}{j}(m).
\end{equation}

By \eqref{e18}
$$    
    \bar{D}_{p-i} =
    \sum_{j=0}^i
    \binom{p-i}{i-j}(-1)^{i+j} D_{p-j}.
$$    
By \eqref{e6} and \eqref{e8} we further have
\begin{equation}\label{e22}
  \bar{D}_{p-i} = \sum_{j=0}^i
    \binom{p-i}{i-j}(-1)^{i+j} \s{d}{j}(m)(-1)^j
    \frac{1}{m^{j+1}}
    \binom{p}{j}.
\end{equation}

Comparing the right-hand sides of \eqref{e21}-\eqref{e22} we see, by \eqref{e23}, that they are equal, and hence their left-hand sides are equal, completing the proof of the Main Theorem.

\section{Fun Patterns and Identities}

The four, integer, triangular arrays presented in
Tables \ref{tab:f}-\ref{tab:s} are new, not previously found in OEIS. Consistent, with Fibonacci Quarterly tradition, we list a collection of patterns and identities found in  these triangles.
Patterns exist both in the columns and diagonals. We suffice with proving one of these since the proofs are all similar and follow from the defining equation, \eqref{e6}.  All identities hold for $i,j \ge 0,$ unless otherwise stated.

\begin{itemize}
    \item $sign(\s{T}{f}_{i,j}) =  
    sign(\s{T}{d}_{i,j}) = 
    sign(\s{T}{s}_{i,j})=(-1)^j$ ;\; \; 
    $sign(\s{T}{t}_{i,j}) = 1 $
    \item $\s{T}{f}_{i,i} =(-1)^i$ ;\; \;  
    $\s{T}{d}_{i,i+1}=\s{T}{s}_{i,i+1}=(-1)^{i+1} ;\; \;$
    $\s{T}{t}_{i,2j+1}=0$
    \item  $\s{T}{t}_{2i+1,2i}=2;\; \;
        \s{T}{t}_{2i+2,2i}=2^{2i+3}$
    \item $\s{T}{f}_{i,0} = 
    \s{T}{t}_{i,0}=
    \s{T}{d}_{i,0}= -\s{T}{d}_{i,1} =
    \frac{1}{2} \s{T}{s}_{i,0}=
    2^i i!$
    \item For $i \ge 1,$ 
    $\s{T}{f}_{i,1} =2^{i-1} i! ;\; \;
    \text{ for } i \ge 1, 
    \s{T}{s}_{i,1} = -3i!2^{i-1}$ 
    \item $\s{T}{d}_{i,i}=(-1)^i 2^i ;\; \; 
    \s{T}{f}_{i+1,i}=(-1)^i 2^{i+1} ;\; \;
    \s{T}{s}_{i,i}=(-1)^i (2^i+1)$
\end{itemize}

\begin{proof} We prove the fourth bulleted item. First note that by the construction of Tables \ref{tab:f}-\ref{tab:s}, for any symbol $h \in \{f, t, d, s \},$ by \eqref{e4},
$$
        \s{T}{h}_{i,j} = \s{h}{i}_j.
$$
Hence to prove  $\s{T}{f}_{i,0} = 2^i i!$ it suffices to prove
$\s{f}{i}_0 = 2i \s{f}{i-1}_0, i \ge 1.$ 

However, by \eqref{e6}, for $i \ge 1,$ we have $\s{f}{i-1}_0=\s{t}{i-1}_0=\s{d}{i-1}_0.$ Additionally, by \eqref{e6}, since 
$\s{s}{i-1} = \s{f}{i-1} +\s{d}{i-1},$ we have
$\s{s}{i-1}_0=2\s{f}{i-1}_0.$ 

Finally, by \eqref{e6}, we have
$$\s{f}{i}_0 = \binom{i}{i-1} \s{s}{i-1}_0 = 2i \s{f}{i-1}_0$$
as was required to prove.
\end{proof}

\section{Conclusion}

This paper further generalizes the result of Shannon and Ollerton who in turn resurrected an oldie of Ledin. Besides the four new triangular arrays the proof was greatly facilitated by creating a system of four simultaneous recursions two of which never enter the statement of the Main Theorem. We believe this approach of simultaneous systems fruitful and applicable to other areas.

\medskip

\noindent MSC2020: 11B39

\end{document}